\documentclass[a4, 12pt]{amsart}
\usepackage[mathscr]{eucal}
\usepackage{amssymb}
\usepackage{latexsym}
\usepackage{amsthm}
\usepackage{color}
\theoremstyle{plain}
\newtheorem{theorem}{Theorem}[section]

\newtheorem{remark}{Remark}[section]
\newtheorem{lemma}{Lemma}[section]
\newtheorem{proposition}{Proposition}[section]

\setlength{\textwidth}{15cm}
\setlength{\textheight}{22.5cm}
\setlength{\evensidemargin}{3mm}
\setlength{\oddsidemargin}{3mm}
\makeatletter
\@addtoreset{equation}{section}

\title[Classification of complete $3$-dimensional self-shrinkers]
{A classification of complete $3$-dimensional self-shrinkers in  Euclidean space $\mathbb R^{4}$}
\author [Q. -M. Cheng, Z. Li and G. Wei]{Qing-Ming Cheng,  Zhi Li  and Guoxin Wei}

\address{Qing-Ming Cheng \\  \newline \indent Department of Applied Mathematics, Faculty of Science,
\newline \indent Fukuoka University, Fukuoka  814-0180, Japan.}
\email{cheng@fukuoka-u.ac.jp}

\address{Zhi Li \\  \newline \indent College of Mathematics and Information Science, Henan Normal University,
\newline \indent 453007, Xinxiang, Henan, China.}
\email{lizhihnsd@126.com}

\address{Guoxin Wei \\ \newline \indent School of Mathematical Sciences, South China Normal University,
\newline \indent 510631, Guangzhou,  China.}
\email{weiguoxin@tsinghua.org.cn}

\begin{document}
\maketitle

\begin{abstract}
In this paper, we completely  classify $3$-dimensional complete self-shrinkers with constant norm $S$ of
the second fundamental form  and  constant $f_{3}$ in Euclidean space $\mathbb R^{4}$, where $h_{ij}$
are components of the second fundamental form, $S=\sum_{i,j}h^{2}_{ij}$ and $f_{3}=\sum_{i,j,k}h_{ij}h_{jk}h_{ki}$.
\end{abstract}

\footnotetext{2020 \textit{Mathematics Subject Classification}:
53E10, 53C40.}
\footnotetext{{\it Key words and phrases}: mean curvature flow,
 self-shrinker,  the generalized maximum principle.}
\footnotetext{The first author was partially supported by JSPS Grant-in-Aid for Scientific Research:
No.16H03937 and No. 22K03303 and the  fund of Fukuoka University: No. 225001.
The second author was partially supported by  China Postdoctoral Science Foundation Grant No. 2022M711074.
The third author was partly supported by grant No.  12171164 of NSFC,
GDUPS (2018), Guangdong
Natural Science Foundation Grant No.2023A1515010510.}

\section{introduction}
\vskip2mm
\noindent

It is well-known that  mean curvature flow has been used to model various things  in material sciences and physics
such as cell, bubble growth and so on. Hence, study on the
mean curvature flow is a very important subject in  the differential geometry.
One of the most important problems in the  mean curvature flow is to understand
the possible singularities that the flow goes through.  In order to describe
singularities of the mean curvature flow,  self-shrinkers  of the mean curvature  flow  play a very key  role. A hypersurface
$X: M^{n}\to \mathbb{R}^{n+1}$ of Euclidean space $\mathbb{R}^{n+1}$  is called {\it a self-shrinker of the mean curvature flow} if it satisfies
\begin{equation}\label{1.1-1}
 H+ \langle X, e_{n+1}\rangle =0,
\end{equation}
where  $e_{n+1}$  and $H$ denote the normal vector and the mean curvature  of  $X: M^{n}\to \mathbb{R}^{n+1}$, respectively.

\noindent
For classifications of complete self-shrinkers,
many nice works have been done.  Abresch and Langer \cite{AL}  classified
closed self-shrinker curves completely. These curves are so-called Abresch-Langer curves.
In \cite{H2}, Huisken proved that sphere $S^n(\sqrt n)$ is the only
$n$-dimensional  compact self-shrinkers
in $\mathbb{R}^{n+1}$ with non-negative mean curvature. Furthermore, in \cite{H3} and \cite{CM}, Huisken and
Colding and Minicozzi  completely classified complete self-shrinkers with
non-negative mean curvature and polynomial volume growth.
According to the results  of Halldorsson \cite{H}, Ding and Xin \cite{DX1}, Cheng and Zhou \cite{CZ}, one knows
that $\gamma \times \mathbb R^{n-1}$ is a complete self-shrinker without polynomial volume growth in $\mathbb R^{n+1}$, where
$\gamma$ is a complete self-shrinking curve of Halldorsson  \cite{H}. Hence, the  condition of polynomial volume growth in \cite{CM}
is essential. On the other hand, Colding and Minicozzi  \cite{CM}   (cf.  Andrews, Li and Wei  \cite{ALW}, Arezzo and Sun \cite{AS}  for  higher
co-dimensions)  gave a classification of  n-dimensional $\mathcal F$-stable complete  self-shrinker with polynomial volume growth in  $\mathbb{R}^{n+1}$.  Furthermore, Cao and Li \cite{CL} (cf. Le and Sesum \cite{LS}) gave
a classification  for complete self-shrinkers of the mean curvature flow with polynomial volume growth and $S \leq 1$,
where $S$ denotes the squared norm of the second fundamental form.
By making use of the generalized maximum principle,  Cheng and Peng \cite{CP} studied complete self-shrinkers of the mean curvature flow without the assumption of  polynomial volume growth.
For study on the rigidity of complete self-shrinkers, many works have been done
(cf. \cite{CHW}, \cite{CP}, \cite{cw2}, \cite{lw}, \cite{LXX}, \cite{LW1}, \cite{LW2}, \cite{W},  and so on).

\noindent
For complete self-shrinkers with constant squared norm of the second fundamental form, the following conjecture is well-known and very
important (cf. \cite{cw1}, \cite{CLW}).

\vskip2mm
\noindent
{\bf Conjecture}. {\it  An $n$-dimensional complete self-shrinker  $X: M\rightarrow \mathbb{R}^{n+1}$
 with constant  squared norm of the second fundamental form
is isometric to one of  $S^{n}(\sqrt{n})$, $\mathbb{R}^{n}$, $S^k (\sqrt k)\times \mathbb{R}^{n-k}$, $1\leq k\leq n-1$.}
\vskip2mm

In \cite{DX2}, by estimating the first eigenvalue of the Dirichlet  eigenvalue problem,
Ding and Xin  proved
that a $2$-dimensional complete self-shrinker  $X: M^{2}\rightarrow \mathbb{R}^{3}$  with polynomial volume growth
and with constant  squared norm $S$ of the second fundamental form
is isometric to one of  $S^{2}(\sqrt{2})$, $\mathbb{R}^{2}$,
$S^{1} (1)\times \mathbb{R}$.
Recently, Cheng and Ogata \cite{CO}  have solved  the above conjecture for $n=2$ affirmatively,  that is, they have proved the following:

\vskip3mm
\noindent
{\bf Theorem CO.}
{\it A $2$-dimensional complete self-shrinker  $X: M\rightarrow \mathbb{R}^{3}$  with constant  squared norm of the second fundamental form
is isometric to one of $S^{2}(\sqrt{2})$, $\mathbb{R}^{2}$, $S^1 (1)\times \mathbb{R}$.}

\vskip2mm

Cheng and Peng \cite{CP}  proved that an $n$-dimensional complete self-shrinker  $X: M\rightarrow \mathbb{R}^{n+1}$
 with constant  squared norm of the second fundamental form
is isometric to one of  $S^k (\sqrt k)\times \mathbb{R}^{n-k}$, $1\leq k\leq n-1$, and $S^{n}(\sqrt{n})$ if $\inf H^2>0$.
Hence, in order to solve the above conjecture, ones only need to prove $H\equiv 0$ if $\inf H^2=0$.
\vskip2mm
\noindent
For the  dimension $3$, by studying  the infimum of $H^2$, Cheng, Li and Wei \cite{CLW} have proved the above conjecture is true
under the assumption that  $f_4=\sum_{i,j,k,l}h_{ij}h_{jk}h_{kl}h_{li}$ is constant. In this paper, by considering the supremum and  the infimum of $H$, we prove the above
conjecture is true if  $f_{3}$ is constant. In fact, we prove the following:

\begin{theorem}\label{theorem 1.1}
 Let $X: M^{3}\to \mathbb{R}^{4}$ be a
$3$-dimensional complete  self-shrinker in $\mathbb R^{4}$.
If the squared norm $S$ of the second fundamental form and $f_{3}$ are constant, then $X: M^{3}\to \mathbb{R}^{4}$ is
isometric to one of
\begin{enumerate}
\item $S^{3}(\sqrt{3})$,
\item $\mathbb {R}^{3}$,
\item $S^{1}(1)\times \mathbb{R}^{2}$,
\item $S^{2}(\sqrt{2})\times \mathbb{R}^{1}$.
\end{enumerate}
In particular, $S$ must be $0$ and $1$; $f_{3}$ must be $0$, $1$, $\frac{\sqrt{2}}{2}$ and $\frac{\sqrt{3}}{3}$, where $h_{ij}$ are the components of the second fundamental form, $S=\sum\limits_{i,j}h_{ij}^2$ and  $f_{3}=\sum\limits_{i,j,k}h_{ij}h_{jk}h_{ki}$.
\end{theorem}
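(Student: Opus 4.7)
The plan is to derive a small package of Simons-type identities for the drift Laplacian $\mathcal L=\Delta-\langle X,\nabla\cdot\rangle$, combine them with the constancy of $S$ and $f_3$ to obtain pointwise algebraic constraints, and then extract the classification via the Omori--Yau-type generalized maximum principle for self-shrinkers used in \cite{CP}. A direct computation from the self-shrinker equation $H+\langle X,\nu\rangle=0$ yields
\[
\mathcal L H=(1-S)H,\qquad \tfrac12\mathcal L S=|\nabla A|^2+S(1-S),\qquad \tfrac12\mathcal L f_3=3\textstyle\sum h_{ij;l}h_{jk;l}h_{ki}+\tfrac32 f_3(1-S),
\]
where the sum runs over $i,j,k,l$. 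Since $S$ and $f_3$ are constant, the second and third identities collapse to the pointwise equalities
\[
|\nabla A|^2=S(S-1),\qquad \sum h_{ij;l}h_{jk;l}h_{ki}=\tfrac12 f_3(S-1).
\]

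The first identity is already powerful: the sign constraint $|\nabla A|^2\ge 0$ forces $S\in\{0\}\cup[1,\infty)$. In the subcase $S=0$ the hypersurface is totally geodesic and hence $\mathbb R^3$. For $S\ge 1$ I split according to $\inf H^2$. When $\inf H^2>0$, Cheng--Peng \cite{CP} already delivers $M^3$ as one of the three models $S^3(\sqrt 3)$, $S^2(\sqrt 2)\times\mathbb R$, $S^1(1)\times\mathbb R^2$. The genuinely new case is therefore $S\ge 1$ together with $\inf H^2=0$, and the task is to show it forces $S=1$, after which $\nabla A\equiv 0$ by the first identity and the isoparametric classification under the self-shrinker equation finishes the proof.

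To attack this case I apply the generalized maximum principle to the bounded function $H$ (bounded because $H^2\le 3S$). It produces sequences $\{p_k\},\{q_k\}\subset M$ with $H(p_k)\to H^*:=\sup H$, $|\nabla H|(p_k)\to 0$, $\limsup_k\mathcal L H(p_k)\le 0$, and the dual inequalities at $q_k$ with $H(q_k)\to H_*:=\inf H$. Substituting $\mathcal L H=(1-S)H$ gives
\[
(1-S)H^*\le 0\le (1-S)H_*,
\]
and since $\inf H^2=0$ forces $0\in[H_*,H^*]$, the case $S<1$ is ruled out immediately (it would give $H^*=H_*=0$ but also $|\nabla A|^2<0$). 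Newton's identities sharpen the picture: constancy of $S$ and $f_3$ makes the unordered eigenvalue triple $(\lambda_1,\lambda_2,\lambda_3)$ of $A$ a function of $H$ alone, so proving $H^*=H_*$ (i.e.\ $H$ constant) would automatically give an isoparametric self-shrinker.

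The main obstacle is the strict case $S>1$, where $|\nabla A|^2=S(S-1)$ is a positive constant, $\nabla A\not\equiv 0$, and the first-order inequalities above leave $H$ room to oscillate. My plan here is to feed the second pointwise identity into the extremizing sequences: along $\{p_k\},\{q_k\}$ the quantities $|\nabla A|^2$ and $\sum h_{ij;l}h_{jk;l}h_{ki}$ are pinned to fixed constants while the eigenvalues of $A$ converge to the values determined by $H^*,H_*$, so the algebraic identity encodes an incompatibility between the asymptotic limits of $\nabla A$ at $p_k$ and at $q_k$. Combining this with a second-order use of the generalized maximum principle applied to $H^2$, whose drift Laplacian is $\mathcal L H^2=2(1-S)H^2+2|\nabla H|^2$, should yield the desired contradiction with $S>1$. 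This is the only substantial analytic hurdle; once it is cleared, the classification follows, and the listed values of $S$ and $f_3$ are an immediate check on each of the four models.
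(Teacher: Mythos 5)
There is a genuine gap, and it sits exactly at the only hard point of the theorem. Your easy steps are fine (the identities of Lemma \ref{lemma 2.1}, the conclusion $S\in\{0\}\cup[1,\infty)$ from $|\nabla A|^2=S(S-1)\ge 0$, the case $S=0$, and the reduction via Cheng--Peng when $\inf H^2>0$), but the case $S>1$ is never actually proved: your own text says the combination of the two sequences ``should yield the desired contradiction,'' which is a plan, not an argument. Concretely, the first-order information you extract at the extremizing sequences, $(1-S)H^*\le 0\le (1-S)H_*$, is perfectly consistent with $S>1$ and an oscillating $H$; and the mechanism you propose to go further is doubtful on its face. The limit data at the sup-sequence $\{p_k\}$ and the inf-sequence $\{q_k\}$ satisfy \emph{the same} algebraic system (constancy of $S$ and $f_3$ imposes identical constraints at both), so there is no reason to expect an ``incompatibility between the asymptotic limits of $\nabla A$'' at the two sequences; likewise, applying the maximum principle to $H^2$, with $\tfrac12\mathcal L H^2=|\nabla H|^2+(1-S)H^2$, yields nothing beyond what $H$ already gave. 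What the paper actually does (Propositions \ref{theorem 3.1} and \ref{theorem 3.2}) is different in kind: it shows that at \emph{any} such sequence the limit mean curvature is pinned to the single explicit value $\bar H=\frac{3f_3}{2S}$, whence $\sup H=\inf H$, $H$ is constant, the principal curvatures are constant by Newton's identities, and $S>1$ is then excluded because all isoparametric self-shrinkers in $\mathbb R^4$ have $S\in\{0,1\}$.

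Even granting your strategy, two ingredients needed to run any second-order limit analysis are missing from your package. First, to pass to limits of $h_{ijk}(p_t)$ and $h_{ijkl}(p_t)$ you need these sequences bounded; that is supplied in the paper by Lemma \ref{lemma 2.2}, i.e.\ the identity \eqref{2.1-16} expressing $\sum h_{ijkl}^2$ through bounded quantities -- your list of identities stops at third order, so compactness of the fourth-order data is unjustified. Second, the actual engine of the proof is the full system of limit equations obtained by differentiating $S$ and $f_3$ \emph{twice} (e.g.\ $\sum_i\bar\lambda_i\bar h_{iikl}=-\sum_{i,j}\bar h_{ijk}\bar h_{ijl}$ and $\sum_i\bar\lambda_i^2\bar h_{iikl}=-2\sum_{i,j}\bar\lambda_i\bar h_{ijk}\bar h_{ijl}$ as in \eqref{3.1-6} and \eqref{3.1-9}), combined with the Ricci identities and the self-shrinker formulas \eqref{2.1-11}, followed by a case analysis on the multiplicities of the limit eigenvalues $\bar\lambda_i$. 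The degenerate subcases are where the real work lies: for instance, when $\bar\lambda_1\neq\bar\lambda_2=\bar\lambda_3$, both nonzero, with $\bar h_{221}^2+\bar h_{123}^2=0$, the paper derives $\bar\lambda_1\bar H=1$ and then the contradiction $\bar h_{222}^2+\bar h_{223}^2=-\bar\lambda_2^2/2<0$, while the nondegenerate cases produce $2\bar HS-3f_3=0$ from the third-order identity $2\sum_{i,j,k,l}h_{ijl}h_{jkl}h_{ki}=f_3(S-1)$. None of this case analysis, nor any substitute for it, appears in your proposal, so the theorem is not established.
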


\begin{remark}
Recently, Cheng, Wei and Yano \cite{CWY} proved that,  for an $n$-dimensional complete self-shrinker  $X: M\rightarrow \mathbb{R}^{n+1}$
  in $\mathbb{R}^{n+1}$,
if the squared norm $S$ of the second fundamental form  and $f_3$ are  constant and $S$ satisfies
$$
S\leq 1.83379,
$$
then $M$ is isometric to one of  $S^{n}(\sqrt{n})$, $\mathbb{R}^{n}$,  $S^{k} (\sqrt k)\times \mathbb{R}^{n-k}$,  $1\leq k\leq n-1$.
\end{remark}

\vskip5mm
\section {Preliminaries}
\vskip2mm

\noindent

For an $n$-dimensional hypersurface $X: M^{n} \rightarrow\mathbb{R}^{n+1}$  of $(n+1)$-dimensional Euclidean space $\mathbb{R}^{n+1}$,
we choose a local orthonormal frame field
$\{e_{A}\}_{A=1}^{n+1}$ in $\mathbb{R}^{n+1}$ with dual co-frame field
$\{\omega_{A}\}_{A=1}^{n+1}$, such that, restricted to $M^{n}$,
$e_{1},\cdots, e_{n}$  are tangent to $M^{n}$. We make  use of  the following conventions on the ranges of indices,
$$
 1\leq i,j,k,l\leq n.
$$
 $\sum_{i}$ means taking  summation from $1$ to $n$ for $i$.
Then we have
\begin{equation*}
dX=\sum_i\limits \omega_{i} e_{i}
\end{equation*}
and the Levi-Civita connection $\omega_{ij}$ of the hypersurface satisfies
\begin{equation*}
de_{i}=\sum_{j}\limits \omega_{ij}e_{j}+\omega_{i n+1}e_{n+1},
\end{equation*}
\begin{equation*}
de_{n+1}=\omega_{n+1 i}e_{i}, \ \ \omega_{n+1i}=-\omega_{in+1}.
\end{equation*}

\noindent By  restricting  these forms to $M$,  we get
\begin{equation}\label{2.1-1}
\omega_{n+1}=0.
\end{equation}

\noindent Thus, we obtain, from Cartan lemma,
\begin{equation}\label{2.1-2}
\omega_{in+1}=\sum_{j} h_{ij}\omega_{j},\quad
h_{ij}=h_{ji}.
\end{equation}

$$
h=\sum_{i,j}h_{ij}\omega_i\otimes\omega_j, \ \ H= \sum_i\limits h_{ii}
$$
are called  second fundamental form and mean curvature  of $X: M\rightarrow\mathbb{R}^{n+1}$, respectively.
Let $S=\sum_{i,j}\limits (h_{ij})^2$ be  the squared norm
of the second fundamental form  of $X: M\rightarrow\mathbb{R}^{n+1}$.
The Gauss equations of hypersurface are given by
\begin{equation}\label{2.1-3}
R_{ijkl}=h_{ik}h_{jl}-h_{il}h_{jk}.
\end{equation}

\noindent
Defining
covariant derivative of $h_{ij}$ by
\begin{equation}\label{2.1-4}
\sum_{k}h_{ijk}\omega_k=dh_{ij}+\sum_kh_{ik}\omega_{kj}
+\sum_k h_{kj}\omega_{ki},
\end{equation}
we obtain the Codazzi equations
\begin{equation}\label{2.1-5}
h_{ijk}=h_{ikj}.
\end{equation}
By defining
\begin{equation}\label{2.1-6}
\sum_lh_{ijkl}\omega_l=dh_{ijk}+\sum_lh_{ljk}\omega_{li}
+\sum_lh_{ilk}\omega_{lj}+\sum_l h_{ijl}\omega_{lk}
\end{equation}
and
\begin{equation}\label{2.1-8}
\begin{aligned}
\sum_mh_{ijklm}\omega_m&=dh_{ijkl}+\sum_mh_{mjkl}\omega_{mi}
+\sum_mh_{imkl}\omega_{mj}\\
&+\sum_mh_{ijml}\omega_{mk}
+\sum_mh_{ijkm}\omega_{ml}
\end{aligned}
\end{equation}
we have the following Ricci identities
\begin{equation}\label{2.1-7}
h_{ijkl}-h_{ijlk}=\sum_m
h_{mj}R_{mikl}+\sum_m h_{im}R_{mjkl}.
\end{equation}
\begin{equation}\label{2.1-9}
\begin{aligned}
h_{ijklq}-h_{ijkql}&=\sum_{m} h_{mjk}R_{milq}
+ \sum_{m}h_{imk}R_{mjlq}+ \sum_{m}h_{ijm}R_{mklq}.
\end{aligned}
\end{equation}
For a smooth function $f$, we define
\begin{equation*}
df:=\sum_i f_{,i}\omega_i, \ \ \sum_j f_{,ij}\omega_j:=df_{,i}+\sum_j
f_{,j}\omega_{ji}.
\end{equation*}
Therefore, we know that  the norm of gradient $\nabla f$ and Laplacian of $f$ are given by
\begin{equation*}
|\nabla f|^2=\sum_{i }(f_{,i})^2,\ \ \Delta f =\sum_i f_{,ii}.
\end{equation*}

\noindent
We define  functions  $f_{3}$  and $f_4$ as follows:
$$f_{3}=\sum_{i,j,k}h_{ij}h_{jk}h_{ki}, \ \ \ f_{4}=\sum_{i,j,k,l}h_{ij}h_{jk}h_{kl}h_{li}.
$$

\noindent
As in \cite{CM}, we define the $\mathcal{L}$-operator  by
\begin{equation*}
\mathcal{L}f=\Delta f-\langle X,\nabla f\rangle.
\end{equation*}

\noindent  A hypersurface  $X: M\rightarrow\mathbb{R}^{n+1}$
is called {\it a self-shrinker of mean curvature flow} if
$$H+\langle X, e_{n+1}\rangle =0.
$$
By a simple calculation, we have the following basic formulas.

\begin{equation}\label{2.1-11}
\aligned
H_{,i}
=&\sum_{k}h_{ik}\langle X, e_{k}\rangle, \\
H_{,ij}
=&\sum_{k}h_{ijk}\langle X, e_{k}\rangle+h_{ij}-H\sum_{k}h_{ik}h_{kj}.
\endaligned
\end{equation}

\noindent
Using the above formulas and the Ricci identities, we can get the following Lemma (cf. \cite{CW}, \cite{CLW}):

\begin{lemma}\label{lemma 2.1}
Let $X:M^n\rightarrow \mathbb{R}^{n+1}$ be an $n$-dimensional   self-shrinker in $\mathbb R^{n+1}$. We have
\begin{equation}\label{2.1-12}
\mathcal{L}H=H(1-S),
\end{equation}
\begin{equation}\label{2.1-13}
\aligned
\frac{1}{2}\mathcal{L}
H^{2}=|\nabla H|^{2}+H^{2}(1-S),
\endaligned
\end{equation}
\begin{equation}\label{2.1-14}
\frac{1}{2}\mathcal{L}S
=\sum_{i,j,k}h_{ijk}^{2}+S(1-S),
\end{equation}
\begin{equation}\label{2.1-15}
\frac{1}{3}\mathcal{L}f_{3}
=2\sum_{i,j,k,l}h_{ijl}h_{jkl}h_{ki}+(1-S)f_{3}.
\end{equation}
\end{lemma}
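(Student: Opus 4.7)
The plan is to derive the four identities from the self-shrinker formulas \eqref{2.1-11}, together with the Codazzi equation \eqref{2.1-5}, Ricci identity \eqref{2.1-7}, and Gauss equation \eqref{2.1-3}; no analytic machinery is needed, and the dimension plays no role. For \eqref{2.1-12}, I take the trace of the second formula in \eqref{2.1-11}: the term $\sum_{i,k}h_{iik}\langle X,e_k\rangle$ collapses by Codazzi to $\sum_k H_{,k}\langle X,e_k\rangle = \langle X,\nabla H\rangle$, while the remaining pieces give $H - HS$, so $\mathcal{L}H = \Delta H - \langle X,\nabla H\rangle = H(1-S)$. Identity \eqref{2.1-13} is then immediate from the product rule $\tfrac12\mathcal{L}(H^2) = H\,\mathcal{L}H + |\nabla H|^2$ combined with \eqref{2.1-12}.

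The core computation is \eqref{2.1-14}, which requires the Simons-type expression for $\Delta h_{ij}$. Starting from $\Delta h_{ij} = \sum_k h_{ijkk}$, I rewrite it via Codazzi as $\sum_k h_{ikjk}$, apply \eqref{2.1-7} to swap the last two indices (which produces $\sum_k h_{ikkj}$ together with the two curvature sums $\sum_{k,m}h_{mk}R_{mijk} + \sum_{k,m}h_{im}R_{mkjk}$), and note that $\sum_k h_{ikkj} = H_{,ij}$ by Codazzi again. Inserting the Gauss equation into the curvature sums, the two cubic terms $(h^3)_{ij}$ cancel and the remaining terms assemble into
\begin{equation*}
\Delta h_{ij} = H_{,ij} + H (h^2)_{ij} - S h_{ij}, \qquad (h^2)_{ij} := \sum_k h_{ik}h_{kj}.
\end{equation*}
Substituting $H_{,ij}$ from \eqref{2.1-11} cancels the $H(h^2)_{ij}$ term, giving $\mathcal{L}h_{ij} = h_{ij}(1-S)$. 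Then $\tfrac12\mathcal{L}S = \sum_{i,j}h_{ij}\,\mathcal{L}h_{ij} + |\nabla h|^2 = S(1-S) + \sum_{i,j,k}h_{ijk}^2$, which is \eqref{2.1-14}.

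For \eqref{2.1-15} it is cleanest to view $f_3 = \mathrm{tr}(A^3)$ with $A = (h_{ij})$. Symmetry of $A$, the Codazzi symmetry of each $A_{,l}$, and cyclic invariance of the trace give $f_{3,l} = 3\,\mathrm{tr}(A^2 A_{,l})$; differentiating once more and tracing in $l$ produces
\begin{equation*}
\tfrac13\Delta f_3 = 2\sum_{i,j,k,l}h_{ij}h_{jkl}h_{kil} + \sum_{i,j,k}h_{ij}h_{jk}\,\Delta h_{ki}.
\end{equation*}
Inserting $\Delta h_{ki} = \sum_l h_{kil}\langle X,e_l\rangle + h_{ki}(1-S)$ from the previous step, the drift contribution collapses to $\tfrac13\langle X,\nabla f_3\rangle$ and the pointwise term contributes $(1-S)f_3$; absorbing the drift into $\mathcal{L}$ and applying the cyclic relabeling $\sum h_{ij}h_{jkl}h_{kil} = \sum h_{ijl}h_{jkl}h_{ki}$ yields \eqref{2.1-15}. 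The only nontrivial step in the whole lemma is the derivation of $\Delta h_{ij}$ above—essentially careful index bookkeeping with Codazzi (twice), Ricci (once), and Gauss (once)—after which all four identities follow by the $\mathcal{L}$-product rule.
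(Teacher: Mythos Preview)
Your proposal is correct and follows exactly the route the paper indicates: the paper does not spell out a proof but merely states that the lemma follows from the self-shrinker formulas \eqref{2.1-11} together with the Ricci identities (citing \cite{CW}, \cite{CLW}), and your derivation---Codazzi twice, Ricci once, and Gauss once to obtain $\mathcal{L}h_{ij}=(1-S)h_{ij}$, followed by the $\mathcal{L}$-product rule---is precisely the standard computation those references contain. Nothing needs to be added or changed.
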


\noindent
\begin{lemma}\label{lemma 2.2}
Let $X:M^{n}\rightarrow \mathbb{R}^{n+1}$ be an $n$-dimensional  self-shrinker in $\mathbb R^{n+1}$. If $S$ is constant,  we have
\begin{equation}\label{2.1-16}
\aligned
\sum_{i,j,k,l}h_{ijkl}^{2}&=(S-2)\sum_{i,j,k}h_{ijk}^{2}\\
&-6\sum_{i,j,k,l,p}h_{ijk}h_{il}h_{jp}h_{klp}
+3\sum_{i,j,k,l,p}h_{ijk}h_{ijl}h_{kp}h_{lp}.
\endaligned
\end{equation}
\end{lemma}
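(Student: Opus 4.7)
The plan is to derive a Bochner-type identity for $\sum_{i,j,k}h_{ijk}^{2}$. Since $S$ is constant, $\mathcal{L}S=0$, and (2.1-14) forces $\sum h_{ijk}^{2}=S(S-1)$, which is itself a constant; hence $\mathcal{L}\bigl(\sum h_{ijk}^{2}\bigr)=0$. Applying the standard Bochner identity to the tensor $\nabla h$ gives
\[
0=\frac{1}{2}\mathcal{L}\!\left(\sum_{i,j,k}h_{ijk}^{2}\right)=\sum_{i,j,k,l}h_{ijkl}^{2}+\sum_{i,j,k}h_{ijk}\Bigl(\sum_{l}h_{ijkll}-\sum_{l}h_{ijkl}\langle X,e_{l}\rangle\Bigr),
\]
so the task reduces to computing the contraction $\sum h_{ijk}\cdot\mathcal{L}(h_{ijk})$, where $\mathcal{L}(h_{ijk}):=\Delta h_{ijk}-\sum_{l}h_{ijkl}\langle X,e_{l}\rangle$.

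To handle this, I would first combine the self-shrinker formula (2.1-11) with Simons' identity $\Delta h_{ij}=H_{,ij}+H\sum_{k}h_{ik}h_{kj}-Sh_{ij}$ to obtain
\[
\Delta h_{ij}=\sum_{l}h_{ijl}\langle X,e_{l}\rangle+(1-S)h_{ij}.
\]
Differentiating by $\nabla_{k}$ and using $\nabla_{k}\langle X,e_{l}\rangle=\delta_{kl}-Hh_{kl}$ (which follows from $dX=\sum_{i}\omega_{i}e_{i}$ and $\langle X,e_{n+1}\rangle=-H$) yields
\[
\nabla_{k}\Delta h_{ij}=\sum_{l}h_{ijlk}\langle X,e_{l}\rangle+(2-S)h_{ijk}-H\sum_{l}h_{ijl}h_{lk}.
\]
Converting $\nabla_{k}\Delta h_{ij}=\sum_{l}h_{ijllk}$ into $\Delta h_{ijk}=\sum_{l}h_{ijkll}$, and replacing $\sum_{l}h_{ijlk}\langle X,e_{l}\rangle$ by $\sum_{l}h_{ijkl}\langle X,e_{l}\rangle$, via the Ricci identities (2.1-7) and (2.1-9), generates curvature commutator terms. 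Applying the Gauss equation (2.1-3) together with $H_{,i}=\sum_{l}h_{il}\langle X,e_{l}\rangle=\sum_{p}h_{ipp}$ (by Codazzi) reduces these commutators to polynomial contractions of $h_{ij}$ and $h_{ijk}$.

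Collecting everything yields a representation
\[
\mathcal{L}(h_{ijk})=(2-S)h_{ijk}+(\text{cubic-in-}h\text{ correction terms}).
\]
Contracting with $h_{ijk}$, the linear part produces $(2-S)\sum h_{ijk}^{2}$, which upon moving to the opposite side of the Bochner identity contributes $(S-2)\sum h_{ijk}^{2}$ to (2.1-16). The cubic corrections, after further use of the Codazzi symmetry of $h_{ijk}$ and the Gauss equation, reorganize into the two symmetric quartic contractions $\sum h_{ijk}h_{il}h_{jp}h_{klp}$ and $\sum h_{ijk}h_{ijl}h_{kp}h_{lp}$ with the claimed coefficients $-6$ and $+3$. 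The principal obstacle is the bookkeeping: the $-Hh_{kl}$ piece of $\nabla_{k}\langle X,e_{l}\rangle$, the $H\sum h_{ik}h_{kj}$ term in Simons' identity, and the curvature commutators from interchanging $\nabla_{k}$ with $\Delta$ all generate $H$-dependent quartic terms, and one must verify that these reorganize cleanly (via $H_{,i}=\sum_{p}h_{ipp}$ and the Gauss equation) into the $H$-free symmetric contractions on the right-hand side of (2.1-16).
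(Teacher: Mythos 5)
Your proposal follows exactly the paper's route: since $S$ is constant, \eqref{2.1-14} gives $\sum_{i,j,k}h_{ijk}^{2}=S(S-1)$, hence $\mathcal{L}\sum_{i,j,k}h_{ijk}^{2}=0$, and the identity \eqref{2.1-16} drops out of the Bochner-type formula for $\frac{1}{2}\mathcal{L}\sum_{i,j,k}h_{ijk}^{2}$ computed via the Ricci identities \eqref{2.1-7}, \eqref{2.1-9} and the Gauss equation --- which is precisely the paper's ``direct calculation.'' Your intermediate formulas ($\mathcal{L}h_{ij}=(1-S)h_{ij}$, $\nabla_{k}\langle X,e_{l}\rangle=\delta_{kl}-Hh_{kl}$) are correct, and the $H$-dependent terms you flag do cancel as claimed (the $-H\sum_{l}h_{kl}h_{ijl}$ term against the Ricci-contraction term $H h_{mk}$, and the $H_{,i}$-terms from $\langle X,e_{l}\rangle$-contractions against the divergence of the Gauss curvature terms, using $\nabla S=0$), yielding the stated coefficients $-6$ and $+3$.
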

\begin{proof}
Since $S$ is constant, we know from the lemma 2.1,
$$
\sum_{i,j,k}h_{ijk}^{2}=S(S-1).
$$
By making use of the Ricci identities  \eqref{2.1-7},  \eqref{2.1-9} and a direct calculation, we
have
\begin{equation*}
\aligned
&\frac{1}{2}\mathcal{L}\sum_{i,j,k}h_{ijk}^{2}=\sum_{i,j,k,l}h_{ijkl}^{2}-(S-2)\sum_{i,j,k}(h_{ijk})^{2}\\
&+6\sum_{i,j,k,l,p}h_{ijk}h_{il}h_{jp}h_{klp}
-3\sum_{i,j,k,l,p}h_{ijk}h_{ijl}h_{kp}h_{lp}.
\endaligned
\end{equation*}
Thus we  obtain \eqref{2.1-16} since $\sum_{i,j,k}h_{ijk}^{2}$ is constant.
\end{proof}

\noindent In order to prove our theorem, we need the following generalized maximum principle  which is proved by Cheng and Peng \cite{CP}.

\vskip2mm
\noindent
\begin{lemma}\label{lemma 2.3}
Let $X : M^{n}\to \mathbb{R}^{n+p}$ be a complete self-shrinker with Ricci curvature bounded from below.
Let $f$ be any $C^{2}$-function bounded from above on this self-shrinker.
Then, there exists a sequence of points $\{p_{t}\}\in M^{n}$, such that
\begin{equation*}
\lim_{t\rightarrow\infty} f(X(p_{t}))=\sup f,\quad
\lim_{t\rightarrow\infty} |\nabla f|(X(p_{t}))=0,\quad
\limsup_{t\rightarrow\infty}\mathcal{L}f(X(p_{t}))\leq 0.
\end{equation*}
\end{lemma}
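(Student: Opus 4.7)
\textbf{Setup and dichotomy from $\mathcal{L}S=0$.} The constancy of $S$ bounds the principal curvatures by $\sqrt S$, so $H$ is bounded and, by the Gauss equation, the Ricci curvature is bounded from below; Lemma \ref{lemma 2.3} is therefore available for every bounded $C^2$ function on $M^3$. Applying \eqref{2.1-14} with $\mathcal{L}S=0$ yields $\sum_{i,j,k}h_{ijk}^2=S(S-1)\ge 0$, forcing $S=0$ or $S\ge 1$.

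\textbf{Elementary cases.} If $S=0$ then $X$ is the hyperplane $\mathbb{R}^3$ and $f_3=0$. If $S=1$ then $\nabla h\equiv 0$, and a de~Rham-type splitting combined with the self-shrinker equation $H+\langle X,e_4\rangle=0$ forces $X$ to be isometric to $S^k(\sqrt k)\times\mathbb{R}^{3-k}$ for some $k\in\{1,2,3\}$; direct computation gives $f_3\in\{1,\sqrt 2/2,\sqrt 3/3\}$ respectively.

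\textbf{The main case $S>1$.} The plan is to derive a contradiction, so that only $S\in\{0,1\}$ survive. By the Cheng--Peng theorem cited in the introduction, constancy of $S$ together with $\inf H^2>0$ forces $X$ to be one of the cylinders or the sphere, all of which have $S\le 1$; hence $S>1$ implies $\inf H^2=0$. Apply Lemma \ref{lemma 2.3} to $H$ at $\alpha:=\sup H$ and to $-H$ at $\beta:=\inf H$: using $\mathcal{L}H=H(1-S)$ \eqref{2.1-12} and $1-S<0$ one obtains $\alpha\ge 0\ge\beta$. Because $S$ and $f_3$ are constant, Newton's identities present the three principal curvatures at each $p\in M$ as the roots of the cubic
\[
P_{H(p)}(t)=t^3-H(p)\,t^2+\tfrac{H(p)^2-S}{2}\,t-\tfrac{1}{3}\Bigl(f_3+\tfrac{H(p)^3}{2}-\tfrac{3SH(p)}{2}\Bigr),
\]
whose coefficients depend only on $H(p)$; along the extremal sequences the $\lambda_i$ converge to the multisets of roots of $P_\alpha$ and $P_\beta$.

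Next, $\mathcal{L}f_3=0$ combined with \eqref{2.1-15} yields the pointwise identity $2\sum_{i,j,k,l}h_{ijl}h_{jkl}h_{ki}=(S-1)f_3$; pairing this with $\sum h_{ijk}^2=S(S-1)$, Lemma \ref{lemma 2.2}, and Lemma \ref{lemma 2.3} applied to further bounded auxiliary functions (notably $H^2$ and quadratic combinations of $h_{ijk}$ expanded in a principal frame) produces an algebraic system relating $\alpha$, $\beta$, $S$, $f_3$ and the asymptotic values of $h_{ijk}$. \textbf{The principal obstacle} is to show this system forces $\alpha=\beta$, making $H$ constant; then the cubic $P_H$ has constant coefficients, the principal curvatures are constant, and the self-shrinker equation rules out $S>1$ (the only complete self-shrinkers with constant principal curvatures in $\mathbb{R}^4$ are the four models of the theorem, all with $S\in\{0,1\}$). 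Carrying out this rigidity — where the specific tensorial structure of $f_3$, as opposed to $f_4$ treated in \cite{CLW}, is crucial — is the technical heart of the argument. Once $S>1$ is eliminated, the elementary cases produce exactly the four models and the stated values of $f_3$.
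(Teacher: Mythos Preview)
Your proposal does not prove Lemma~\ref{lemma 2.3} at all. The lemma is the generalized maximum principle for the $\mathcal{L}$-operator on self-shrinkers; its content is that for any bounded-above $C^2$ function $f$ one can produce a maximizing sequence along which $|\nabla f|\to 0$ and $\limsup\mathcal{L}f\le 0$. Nothing in your text addresses this. The paper does not prove it here either---it is quoted from Cheng--Peng \cite{CP}, where it follows from an Omori--Yau type argument using the Ricci lower bound and the specific drift term $\langle X,\nabla f\rangle$. What you have written is instead a sketch of Theorem~\ref{theorem 1.1}, \emph{using} Lemma~\ref{lemma 2.3} as a black box.

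Even read as an attempt at Theorem~\ref{theorem 1.1}, there is a genuine gap. You correctly reduce to $S>1$, apply the maximum principle to $\pm H$, and set up Newton's identities, but then you write that ``the principal obstacle is to show this system forces $\alpha=\beta$'' and that carrying this out ``is the technical heart of the argument''---and you stop there. The paper actually does this work: Propositions~\ref{theorem 3.1} and~\ref{theorem 3.2} run a case analysis on the multiplicities of the limiting principal curvatures $(\bar\lambda_1,\bar\lambda_2,\bar\lambda_3)$ and, using the linear systems \eqref{3.1-2}--\eqref{3.1-9} obtained by differentiating $S$ and $f_3$ once and twice, eliminate every configuration except the one yielding the explicit value $\sup H=\inf H=3f_3/(2S)$. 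Your cubic $P_{H(p)}$ is a fine organizing device, but the decisive computations---ruling out the various multiplicity patterns and extracting $3f_3/(2S)$---are absent, so the proposal does not close the case $S>1$.
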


 \vskip10mm
\section{Proof of Theorem 1.1}

\vskip2mm

From \eqref{2.1-14}, we know that $S=0$ or $S=1$ or $S>1$.  If $S=0$, we know that $X: M^{3}\to \mathbb{R}^{4}$ is $\mathbb{R}^{3}$. Next, we assume that $S\geq1$. We will prove the following

\begin{proposition}\label{theorem 3.1}
For a $3$-dimensional complete self-shrinker $X:M^{3}\rightarrow \mathbb{R}^{4}$ with nonzero constant squared norm $S$ of the second fundamental form,  if $f_{3}$ is constant,
we have that either $S=1$ or $\sup H=\frac{3f_{3}}{2S}$.
\end{proposition}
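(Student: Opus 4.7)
The plan is to apply the generalized maximum principle (Lemma~\ref{lemma 2.3}) to $H$. Since $\sum_{i,j}h_{ij}^2=S$ is a fixed constant, each $|h_{ij}|\le\sqrt S$, so $H$ is bounded and the Gauss equation \eqref{2.1-3} gives Ricci curvature bounded below. If $S=1$ then \eqref{2.1-14} gives $\sum h_{ijk}^2=S(S-1)=0$, so the second fundamental form is parallel and the result follows from the standard classification. Assume henceforth $S>1$.

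By Lemma~\ref{lemma 2.3} there is a sequence $\{p_t\}\subset M^3$ with $H(p_t)\to\sup H$ and $|\nabla H|(p_t)\to 0$. At each $p_t$ pick an orthonormal frame diagonalizing $h_{ij}(p_t)=\lambda_i(p_t)\delta_{ij}$. Because $S$ and $f_3$ are constant, differentiating in the direction $e_i$ yields the exact pointwise identities
\[
\sum_j\lambda_j(p_t)\,h_{jji}(p_t)=0,\qquad \sum_j\lambda_j(p_t)^2\,h_{jji}(p_t)=0
\]
for each $i$, while Codazzi combined with $|\nabla H|\to 0$ forces $\sum_j h_{jji}(p_t)\to 0$. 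After passing to a subsequence, $\lambda_i(p_t)\to\lambda_i^*$ and $h_{ijk}(p_t)\to h_{ijk}^*$, with $\sum\lambda_i^*=\sup H$, $\sum(\lambda_i^*)^2=S$, $\sum(\lambda_i^*)^3=f_3$, and $\sum(h_{ijk}^*)^2=S(S-1)$.

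The core algebraic identity comes from $\mathcal L f_3=0$: by \eqref{2.1-15}, $\sum_{i,j,k,l}h_{ijl}h_{jkl}h_{ki}=(S-1)f_3/2$ pointwise, and in the diagonal frame this simplifies to $\sum_{i,j,l}\lambda_i h_{ijl}^2=(S-1)f_3/2$. If $\lambda_1^*,\lambda_2^*,\lambda_3^*$ are pairwise distinct, the Vandermonde matrix with rows $(1,1,1)$, $(\lambda_1^*,\lambda_2^*,\lambda_3^*)$ and $((\lambda_1^*)^2,(\lambda_2^*)^2,(\lambda_3^*)^2)$ is invertible in the limit, so the three linear relations above force $h_{jji}^*=0$ for all $i,j$. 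Only $h_{123}^*$ then remains, with $6(h_{123}^*)^2=S(S-1)$, and summing over the six permutations of $(1,2,3)$ gives
\[
\sum_{i,j,l}\lambda_i^*(h_{ijl}^*)^2=2\bigl(\lambda_1^*+\lambda_2^*+\lambda_3^*\bigr)(h_{123}^*)^2=\frac{\sup H\cdot S(S-1)}{3}.
\]
Equating to $(S-1)f_3/2$ and dividing by $S-1>0$ produces $\sup H=3f_3/(2S)$, as desired.

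The main obstacle is the degenerate case in which two (or all three) of the $\lambda_i^*$ coincide: the Vandermonde system no longer forces $h_{jji}^*=0$ and more components of $h_{ijk}^*$ survive in the limit. I expect to handle this by selecting the sequence $\{p_t\}$ inside the open subset of $M^3$ where $h$ has three pairwise distinct eigenvalues (so the limit configuration itself is distinct and the preceding argument applies), and when that subset is empty---so the principal curvatures are isoparametric on $M$---by combining the relations on the repeated eigenspace with the nonnegativity $\sum h_{ijkl}^2\ge 0$ supplied by Lemma~\ref{lemma 2.2} to either still recover $\sup H=3f_3/(2S)$ or contradict $S>1$. This case distinction is the technical heart of the proof.
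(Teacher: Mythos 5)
Your argument in the non-degenerate case is correct and is essentially the paper's final case: the Vandermonde relations coming from $\nabla H$, $\nabla S$, $\nabla f_3$ kill $\bar h_{jji}$, only $\bar h_{123}$ survives, and the trace identity from $\mathcal{L}f_3=0$ gives $\sup H=3f_3/(2S)$ (this is \eqref{3.1-26}--\eqref{3.1-27} in the paper). But the degenerate case, which you defer, is a genuine gap, and both of your proposed repairs fail. First, Lemma~\ref{lemma 2.3} hands you a specific sequence $\{p_t\}$; you have no freedom to ``select'' it inside the open set where the eigenvalues of $h$ are pairwise distinct, and even if the eigenvalues at every $p_t$ happened to be distinct, their subsequential limits $\bar\lambda_i$ could still coincide --- the degeneracy is a property of the limit configuration, not of the points $p_t$, so restricting the location of the sequence cannot exclude it. Second, if the set of points with three distinct principal curvatures is empty, you only know that at each point some two principal curvatures agree; this does not make the principal curvatures constant, so the ``isoparametric'' fallback is a non sequitur. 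Worse, the degenerate case cannot be ruled out at all: in the paper's subcase $\bar\lambda_1\neq\bar\lambda_2=\bar\lambda_3$, both nonzero, with $\bar h_{221}^2+\bar h_{123}^2\neq0$ and $\bar h_{222}^2+\bar h_{223}^2=0$, the sequence is admissible and one must \emph{derive} $\sup H=3f_3/(2S)$ there too, via $\sum_{i,j,k}\bar h_{ijk}^2=6(\bar h_{221}^2+\bar h_{123}^2)=S(S-1)$ and $4\bar H(\bar h_{221}^2+\bar h_{123}^2)=(S-1)f_3$.

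The missing machinery is second order. Your sketch uses only first-order consequences of the constancy of $S$ and $f_3$, and these do not suffice even in the simplest degenerate case $\bar\lambda_1=\bar\lambda_2=\bar\lambda_3$: there the first-order relations are automatically consistent with nonzero $\bar h_{ijk}$, and the paper obtains the contradiction $\bar h_{ijk}=0$ (hence $S=1$) only by differentiating $S$ and $f_3$ \emph{twice}, i.e.\ from $\sum_i\bar\lambda_i\bar h_{iikl}=-\sum_{i,j}\bar h_{ijk}\bar h_{ijl}$ and $\sum_i\bar\lambda_i^2\bar h_{iikl}=-2\sum_{i,j}\bar\lambda_i\bar h_{ijk}\bar h_{ijl}$ (equations \eqref{3.1-6} and \eqref{3.1-9}). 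Making sense of these limits requires the boundedness of $\{h_{ijkl}(p_t)\}$, which is exactly what Lemma~\ref{lemma 2.2} provides (not merely the nonnegativity of $\sum h_{ijkl}^2$, as you suggest). In the two-coincide case one additionally needs $\lim_{t\to\infty}\langle X,e_k\rangle(p_t)=0$ (from $\bar H_{,i}=\bar\lambda_i\lim\langle X,e_i\rangle=0$ when all $\bar\lambda_i\neq0$), the resulting limiting Hessian of $H$ via \eqref{2.1-11}, and the Ricci identities \eqref{2.1-7}; these combine to either a sign contradiction such as $\bar h_{222}^2+\bar h_{223}^2=-\bar\lambda_2^2/2$, or $\bar h_{ijk}=0$ contradicting $S>1$, or the desired value of $\sup H$. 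Finally, two small points: the case $0<S<1$ should be disposed of (it is excluded pointwise by \eqref{2.1-14}, since constancy of $S$ gives $\sum h_{ijk}^2=S(S-1)\geq0$, or by citing Cheng--Peng as the paper does), and when $S=1$ the proposition asserts nothing, so your appeal to a ``standard classification'' there is unnecessary.
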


\begin{proof}
\noindent
Since $S$ is constant, if $S\leq 1$, according to the results of Cheng and Peng \cite{CP}, we have $S=1$.
Thus, we only need to consider $S>1$. We choose $e_{1}$, $e_{2}$ and $e_{3}$,  at each point $p\in M^{3}$,  such that
$$
h_{ij}=\lambda_i\delta_{ij}.
$$
\noindent From the definitions of $S$ and $H$, we obtain
$$
H^{2}\leq 3S.
$$
\noindent Since $S$ is constant, from the Gauss
equations, we know that the Ricci curvature of $X:M^{3} \to \mathbb{R}^{4}$
is bounded from below.
We can apply the generalized maximum principle for $\mathcal{L}$-operator
to $H$. Thus, there exists a sequence $\{p_{t}\}$ in $M^{3}$ such that
\begin{equation}\label{3.1-1}
\lim_{t\rightarrow\infty} H(X(p_{t}))=\sup H,\quad
\lim_{t\rightarrow\infty} |\nabla H|(X(p_{t}))=0,\quad
\lim_{t\rightarrow\infty}\sup\mathcal{L}H(X(p_{t}))\leq 0.
\end{equation}

\noindent
Since $S$ and $f_{3}$ are constant, by \eqref{2.1-14} and \eqref{2.1-16}, we know that
$\{h_{ij}(p_{t})\}$,  $\{h_{ijk}(p_{t})\}$ and $\{h_{ijkl}(p_{t})\}$ are bounded sequences for $ i, j, k, l = 1,2,3$.
Hence, we can assume that they convergence if necessary , taking a subsequence.
\begin{equation*}
\begin{aligned}
&\lim_{t\rightarrow\infty}H(p_{t})=\bar H, \ \ \lim_{t\rightarrow\infty}h_{ij}(p_{t})=\bar h_{ij}=\bar \lambda_i\delta_{ij}, \\
&\lim_{t\rightarrow\infty}h_{ijk}(p_{t})=\bar h_{ijk}, \ \  \lim_{t\rightarrow\infty}h_{ijkl}(p_{t})=\bar h_{ijkl}, \ \ i, j, k,l=1, 2, 3.
\end{aligned}
\end{equation*}

\noindent
By taking exterior derivative of $H$, from \eqref{3.1-1} and by taking limits,
we know
\begin{equation}\label{3.1-2}
\bar H_{,i}=\bar h_{11i}+\bar h_{22i}+\bar h_{33i}=0, \ \ i=1, 2, 3.
\end{equation}

\noindent
According to the definition of  the self-shrinker, we have
$$H_{,i}=\sum_{k}h_{ik}\langle X, e_{k}\rangle, \ \  \ \ i=1, 2, 3,$$
$$H_{,ij}=\sum_{k}h_{ijk}\langle X, e_{k}\rangle+h_{ij}-H\sum_{k}h_{ik}h_{kj}, \ \ i,j=1, 2, 3.
$$
Thus, we get
\begin{equation}\label{3.1-3}
\bar H_{,i}=\bar h_{11i}+\bar h_{22i}+\bar h_{33i}=\bar \lambda_{i}\lim_{t\rightarrow\infty} \langle X, e_{i} \rangle(p_{t}),\ \ \ \ i=1, 2, 3,
\end{equation}
and
\begin{equation}\label{3.1-4}
\begin{cases}
\begin{aligned}
&\bar H_{,ii}=\sum_{k}\bar h_{iik}\lim_{t\rightarrow\infty} \langle X, e_{k} \rangle(p_{t})+\bar \lambda_{i}-\bar H\bar \lambda^{2}_{i},\ \ i=1,2,3, \\
&\bar H_{,ij}=\sum_{k}\bar h_{ijk}\lim_{t\rightarrow\infty} \langle X, e_{k} \rangle(p_{t}),\ \ i\neq j, \ \ i,j=1,2,3.
\end{aligned}
\end{cases}
\end{equation}

\noindent Since $S$ is constant, we obtain
$$
\sum_{i,j}h_{ij}h_{ijk}=0, \ \  \ k=1, 2, 3,
$$
$$
\sum_{i,j}h_{ij}h_{ijkl}+\sum_{i,j}h_{ijk}h_{ijl}=0, \ \  \ k,l=1, 2, 3.
$$
Under the processing  by taking limits, we have
\begin{equation}\label{3.1-5}
\bar\lambda_{1}\bar h_{11k}+\bar\lambda_{2}\bar h_{22k}+\bar\lambda_{3}\bar h_{33k}=0, \ \  \ k=1, 2, 3,
\end{equation}
and
\begin{equation}\label{3.1-6}
\begin{cases}
\begin{aligned}
&\sum_{i}\bar \lambda_{i}\bar h_{iikk}=-\sum_{i,j}\bar h^{2}_{ijk}, \ \  \ k=1, 2, 3, \\
&\sum_{i}\bar \lambda_{i}\bar h_{iikl}=-\sum_{i,j}\bar h_{ijk}\bar h_{ijl}, \ \ k\neq l, \ \  \ k,l=1, 2, 3.
\end{aligned}
\end{cases}
\end{equation}

\noindent It follows from Ricci identities \eqref{2.1-7} that
\begin{equation*}
\bar h_{ijkl}-\bar h_{ijlk}=\bar\lambda_{i}\bar\lambda_{j}\bar\lambda_{k}\delta_{il}\delta_{jk}-\bar\lambda_{i}\bar\lambda_{j}\bar\lambda_{l}\delta_{ik}\delta_{jl}
+\bar\lambda_{i}\bar\lambda_{j}\bar\lambda_{k}\delta_{ik}\delta_{jl}-\bar\lambda_{i}\bar\lambda_{j}\bar\lambda_{l}\delta_{il}\delta_{jk},
\end{equation*}
that is,
\begin{equation}\label{3.1-7}
\begin{cases}
\begin{aligned}
& \bar h_{1212}-\bar h_{1221}=\bar \lambda_{1}\bar \lambda_{2}(\bar \lambda_{1}-\bar \lambda_{2}),\ \ \bar h_{1313}-\bar h_{1331}=\bar \lambda_{1}\bar \lambda_{3}(\bar \lambda_{1}-\bar \lambda_{3}),\\
&\bar h_{2323}-\bar h_{2332}=\bar \lambda_{2}\bar \lambda_{3}(\bar \lambda_{2}-\bar \lambda_{3}), \ \ \bar h_{iikl}-\bar h_{iilk}=0, \ \ \ i,k,l=1, 2, 3.
\end{aligned}
\end{cases}
\end{equation}

\noindent Since $f_{3}$ is constant,  by \eqref{2.1-11}, we know that

\begin{equation}\label{3.1-8}
\bar \lambda^{2}_{1}\bar h_{11k}+\bar \lambda^{2}_{2}\bar h_{22k}+\bar \lambda^{2}_{3}\bar h_{33k}=0, \ \  k=1, 2, 3,
\end{equation}
and
\begin{equation}\label{3.1-9}
\begin{cases}
\begin{aligned}
&\sum_{i}\bar \lambda^{2}_{i}\bar h_{iikk}=-2\sum_{i,j}\bar \lambda_{i}\bar h^{2}_{ijk}, \ \  \ k=1, 2, 3, \\
&\sum_{i}\bar \lambda^{2}_{i}\bar h_{iikl}=-2\sum_{i,j}\bar \lambda_{i}\bar h_{ijk}\bar h_{ijl}, \ \ k\neq l, \ \  \ k,l=1, 2, 3.
\end{aligned}
\end{cases}
\end{equation}
From the above equations  \eqref{3.1-2}   to \eqref{3.1-9}, we can prove the following claim:
\vskip1mm
\noindent
{\bf Claim}. If $S>1$, we have
$$
 \bar H=\frac{3f_{3}}{2S}
$$
holds.\newline
In fact, if   $\bar \lambda_1=\bar \lambda_2 =\bar \lambda_3$,  we have
\begin{equation}\label{3.1-10}
\bar H^{2}=3S, \ \ \bar H=3\bar \lambda_{1}.
\end{equation}
From \eqref{3.1-6} and \eqref{3.1-9}, we have
\begin{equation*}
\begin{cases}
\begin{aligned}
\bar \lambda_{1}\sum_{i}h_{ii11}
=&-(\bar h^{2}_{111}+\bar h^{2}_{221}+\bar h^{2}_{331})-2(\bar h^{2}_{112}+\bar h^{2}_{113}+\bar h^{2}_{123}),\\
\bar \lambda_{1}\sum_{i}h_{ii22}
=&-(\bar h^{2}_{112}+\bar h^{2}_{222}+\bar h^{2}_{332})-2(\bar h^{2}_{221}+\bar h^{2}_{223}+\bar h^{2}_{123}),\\
\bar \lambda_{1}\sum_{i}h_{ii33}
=&-(\bar h^{2}_{113}+\bar h^{2}_{223}+\bar h^{2}_{333})-2(\bar h^{2}_{331}+\bar h^{2}_{332}+\bar h^{2}_{123}),
\end{aligned}
\end{cases}
\end{equation*}
and
\begin{equation*}
\begin{cases}
\begin{aligned}
&\bar \lambda^{2}_{1}\sum_{i}h_{ii11}
=-2\bar \lambda_{1}(\bar h^{2}_{111}+\bar h^{2}_{221}+\bar h^{2}_{331})-4\bar \lambda_{1}(\bar h^{2}_{112}+\bar h^{2}_{113}+\bar h^{2}_{123}),\\

&\bar \lambda^{2}_{1}\sum_{i}h_{ii22}
=-2\bar \lambda_{1}(\bar h^{2}_{112}+\bar h^{2}_{222}+\bar h^{2}_{332})-4\bar \lambda_{1}(\bar h^{2}_{221}+\bar h^{2}_{223}+\bar h^{2}_{123}),\\

&\bar \lambda^{2}_{1}\sum_{i}h_{ii33}
=-2\bar \lambda_{1}(\bar h^{2}_{113}+\bar h^{2}_{223}+\bar h^{2}_{333})-4\bar \lambda_{1}(\bar h^{2}_{331}+\bar h^{2}_{332}+\bar h^{2}_{123}).
\end{aligned}
\end{cases}
\end{equation*}
Thus, we infer
\begin{equation}\label{3.1-11}
\bar h_{ijk}=0, \ \ i,j,k=1, 2, 3.
\end{equation}
According to the lemma 2.1, we know $S=1$. It is impossible because of $S>1$.

\vskip2mm
\noindent
If   two of  $\bar \lambda_1$, $\bar \lambda_2$ and $\bar \lambda_3$  are equal,
without loss of generality, we assume that $\bar \lambda_{1}\neq \bar \lambda_{2}=\bar \lambda_{3}$.
Then we get from\eqref{3.1-2} and \eqref{3.1-5}

\begin{equation}\label{3.1-12}
\bar h_{11k}=0, \ \ \bar h_{22k}+\bar h_{33k}=0,\ \ \ k=1, 2, 3.
\end{equation}
\newline
If  $\bar \lambda_{1}=0$, then $\bar \lambda_{2}=\bar \lambda_{3} \neq 0$ because of $S> 1$.
By making use of  equations \eqref{3.1-6}, \eqref{3.1-9} and \eqref{3.1-12}, we know that

\begin{equation*}
\begin{cases}
\begin{aligned}
&\bar \lambda_{2}(\bar h_{2211}+\bar h_{3311})=-2(\bar h^{2}_{221}+\bar h^{2}_{123}),\\
&\bar \lambda_{2}(\bar h_{2222}+\bar h_{3322})=-2(\bar h^{2}_{222}+\bar h^{2}_{223})-2(\bar h^{2}_{221}+\bar h^{2}_{123}),
\end{aligned}
\end{cases}
\end{equation*}
and
\begin{equation*}
\begin{cases}
\begin{aligned}
\bar \lambda^{2}_{2}(\bar h_{2211}+\bar h_{3311})=&-4\bar\lambda_{2}(\bar h^{2}_{221}+\bar h^{2}_{123}),\\

\bar \lambda^{2}_{2}(\bar h_{2222}+\bar h_{3322})=&-4\bar\lambda_{2}(\bar h^{2}_{222}+\bar h^{2}_{223})-2\bar\lambda_{2}(\bar h^{2}_{221}+\bar h^{2}_{123}).
\end{aligned}
\end{cases}
\end{equation*}
Hence, we get
\begin{equation*}
\bar h_{221}=\bar h_{123}=0, \ \ \bar h_{222}=\bar h_{223}=0.
\end{equation*}
Namely, we obtain
\begin{equation}\label{3.1-13}
\bar h_{ijk}=0, \ \ i,j,k=1, 2, 3.
\end{equation}
We have $S=1$,  which contradicts  to $S>1$.
\newline
If $\bar \lambda_{1}\neq0, \ \ \bar \lambda_{2}=\bar \lambda_{3}=0$,
combining it with \eqref{3.1-6}, \eqref{3.1-9} and \eqref{3.1-12}, we have

\begin{equation*}
\begin{cases}
\begin{aligned}
&\bar \lambda_{1}\bar h_{1111}
=-2(\bar h^{2}_{221}+\bar h^{2}_{123}),\\
&\bar \lambda_{1}\bar h_{1122}
=-2(\bar h^{2}_{222}+\bar h^{2}_{223})-2(\bar h^{2}_{221}+\bar h^{2}_{123}),
\end{aligned}
\end{cases}
\end{equation*}
and
\begin{equation*}
\begin{cases}
\begin{aligned}
&\bar \lambda^{2}_{1}\bar h_{1111}=0,\\

&\bar \lambda^{2}_{1}\bar h_{1122}=-2\bar \lambda_{1}(\bar h^{2}_{221}+\bar h^{2}_{123}).
\end{aligned}
\end{cases}
\end{equation*}
We infer
\begin{equation*}
\bar h_{221}=\bar h_{123}=0, \ \ \bar h_{222}=\bar h_{223}=0.
\end{equation*}
Thus, we conclude
\begin{equation}\label{3.1-14}
\bar h_{ijk}=0, \ \ i,j,k=1, 2, 3.
\end{equation}
Hence, we infer $S=1$.  It is also impossible because of $S>1$.
\newline
If $\bar \lambda_{1}\neq 0, \ \ \bar \lambda_{2}=\bar \lambda_{3} \neq 0$,
combining \eqref{3.1-2} and \eqref{3.1-3}, we obtain

\begin{equation}\label{3.1-15}
\lim_{t\rightarrow\infty} \langle X, e_{k} \rangle(p_{t})=0,\ \ \ k=1, 2, 3.
\end{equation}

\noindent
It follows from \eqref{3.1-4}, \eqref{3.1-6}, \eqref{3.1-9}, \eqref{3.1-12} and \eqref{3.1-15} that

\begin{equation}\label{3.1-16}
\begin{cases}
\begin{aligned}
&\bar h_{1111}+\bar h_{2211}+\bar h_{3311}=\bar \lambda_{1}-\bar H\bar \lambda^{2}_{1},\\
&\bar h_{1122}+\bar h_{2222}+\bar h_{3322}=\bar \lambda_{2}-\bar H\bar \lambda^{2}_{2},\\
&\bar h_{1133}+\bar h_{2233}+\bar h_{3333}=\bar \lambda_{2}-\bar H\bar \lambda^{2}_{2},\\
&\bar h_{1112}+\bar h_{2212}+\bar h_{3312}= 0,\\
&\bar h_{1113}+\bar h_{2213}+\bar h_{3313}= 0,\\
&\bar h_{1123}+\bar h_{2223}+\bar h_{3323}= 0,
\end{aligned}
\end{cases}
\end{equation}

\begin{equation}\label{3.1-17}
\begin{cases}
\begin{aligned}
\bar \lambda_{1}\bar h_{1111}+\bar \lambda_{2}(\bar h_{2211}+\bar h_{3311})&=-2(\bar h^{2}_{221}+\bar h^{2}_{123}),\\
\bar \lambda_{1}\bar h_{1122}+\bar \lambda_{2}(\bar h_{2222}+\bar h_{3322})&=-2(\bar h^{2}_{222}+\bar h^{2}_{223})
-2(\bar h^{2}_{221}+\bar h^{2}_{123}),\\
\bar \lambda_{1}\bar h_{1133}+\bar \lambda_{2}(\bar h_{2233}+\bar h_{3333})&=-2(\bar h^{2}_{222}+\bar h^{2}_{223})
-2(\bar h^{2}_{221}+\bar h^{2}_{123}),\\
\bar \lambda_{1}\bar h_{1112}+\bar \lambda_{2}(\bar h_{2212}+\bar h_{3312})&=-2(\bar h_{221}\bar h_{222}+\bar h_{223}\bar h_{123}),\\
\bar \lambda_{1}\bar h_{1113}+\bar \lambda_{2}(\bar h_{2213}+\bar h_{3313})&=-2(\bar h_{221}\bar h_{223}-\bar h_{222}\bar h_{123}),\\
\bar \lambda_{1}\bar h_{1123}+\bar \lambda_{2}(\bar h_{2223}+\bar h_{3323})&=0,
\end{aligned}
\end{cases}
\end{equation}
and
\begin{equation}\label{3.1-18}
\begin{cases}
\begin{aligned}
\bar \lambda^{2}_{1}\bar h_{1111}+\bar \lambda^{2}_{2}(\bar h_{2211}+\bar h_{3311})=&-4\bar\lambda_{2}(\bar h^{2}_{221}+\bar h^{2}_{123}),\\

\bar \lambda^{2}_{1}\bar h_{1122}+\bar \lambda^{2}_{2}(\bar h_{2222}+\bar h_{3322})=&-4\bar\lambda_{2}(\bar h^{2}_{222}+\bar h^{2}_{223})\\
&-2(\bar\lambda_{1}+\bar\lambda_{2})(\bar h^{2}_{221}+\bar h^{2}_{123}),\\

\bar \lambda^{2}_{1}\bar h_{1133}+\bar \lambda^{2}_{2}(\bar h_{2233}+\bar h_{3333})=&-4\bar\lambda_{2}(\bar h^{2}_{222}+\bar h^{2}_{223})\\
&-2(\bar\lambda_{1}+\bar\lambda_{2})(\bar h^{2}_{221}+\bar h^{2}_{123}),\\

\bar \lambda^{2}_{1}\bar h_{1112}+\bar \lambda^{2}_{2}(\bar h_{2212}+\bar h_{3312})=&-4\bar\lambda_{2}(\bar h_{221}\bar h_{222}+\bar h_{223}\bar h_{123}),\\

\bar \lambda^{2}_{1}\bar h_{1113}+\bar \lambda^{2}_{2}(\bar h_{2213}+\bar h_{3313})=&-4\bar\lambda_{2}(\bar h_{221}\bar h_{223}-\bar h_{222}\bar h_{123}),\\

\bar \lambda^{2}_{1}\bar h_{1123}+\bar \lambda^{2}_{2}(\bar h_{2223}+\bar h_{3323})=&0.
\end{aligned}
\end{cases}
\end{equation}

\noindent Furthermore, \eqref{3.1-16} yields
\begin{equation}\label{3.1-19}
\bar h_{2212}+\bar h_{3312}= -\bar h_{1112},\ \
\bar h_{2213}+\bar h_{3313}= -\bar h_{1113}.
\end{equation}
Inserting \eqref{3.1-19} into \eqref{3.1-17} and \eqref{3.1-18}, we derive  to
\begin{equation}\label{3.1-20}
\bar h_{221}\bar h_{222}+\bar h_{223}\bar h_{123}=0, \ \ \bar h_{221}\bar h_{223}-\bar h_{222}\bar h_{123}=0.
\end{equation}

\noindent It follows from \eqref{3.1-17} and \eqref{3.1-18} that

\begin{equation}\label{3.1-21}
\begin{cases}
\begin{aligned}
&\bar \lambda_{1}(\bar \lambda_{2}-\bar \lambda_{1})\bar h_{1111}= 2\bar\lambda_{2}(\bar h^{2}_{221}+\bar h^{2}_{123}),\\

&\bar \lambda_{1}(\bar \lambda_{2}-\bar \lambda_{1})\bar h_{1122}= 2\bar\lambda_{2}(\bar h^{2}_{222}+\bar h^{2}_{223})+2\bar\lambda_{1}(\bar h^{2}_{221}+\bar h^{2}_{123}),\\

&\bar \lambda_{1}(\bar \lambda_{2}-\bar \lambda_{1})\bar h_{1133}= 2\bar\lambda_{2}(\bar h^{2}_{222}+\bar h^{2}_{223})+2\bar\lambda_{1}(\bar h^{2}_{221}+\bar h^{2}_{123}).
\end{aligned}
\end{cases}
\end{equation}
From \eqref{3.1-20}, we have $\bar h^{2}_{221}+\bar h^{2}_{123} = 0$,  or  $ \bar h^{2}_{222}+\bar h^{2}_{223} = 0$.
\vskip1mm
If both $\bar h^{2}_{221}+\bar h^{2}_{123}=0$  and $ \bar h^{2}_{222}+\bar h^{2}_{223} =0$,
we have
\begin{equation*}
\bar h_{ijk}=0, \ \ i,j,k=1, 2, 3.
\end{equation*}
From the lemma 2.1, $S=1$.   It is a contradiction.
\vskip1mm
If  $\bar h^{2}_{221}+\bar h^{2}_{123}=0$ and  $\bar h^{2}_{222}+\bar h^{2}_{223} \neq 0$,
by making use of \eqref{3.1-21}, we have
\begin{equation*}
\begin{cases}
\begin{aligned}
&\bar \lambda_{1}(\bar \lambda_{2}-\bar \lambda_{1})\bar h_{1111}= 0,\\

&\bar \lambda_{1}(\bar \lambda_{2}-\bar \lambda_{1})\bar h_{1122}= 2\bar\lambda_{2}(\bar h^{2}_{222}+\bar h^{2}_{223}),\\

&\bar \lambda_{1}(\bar \lambda_{2}-\bar \lambda_{1})\bar h_{1133}= 2\bar\lambda_{2}(\bar h^{2}_{222}+\bar h^{2}_{223}).
\end{aligned}
\end{cases}
\end{equation*}
Thus,
\begin{equation}\label{3.1-22}
\bar h_{1111}= 0, \ \ \bar h_{1122}=\bar h_{1133}=\frac{2\bar\lambda_{2}}{\bar \lambda_{1}(\bar \lambda_{2}-\bar \lambda_{1})}(\bar h^{2}_{222}+\bar h^{2}_{223}).
\end{equation}

\noindent
Noting that $\bar h_{1111}=0$ and by  \eqref{3.1-17}, we know
\begin{equation*}
\bar h_{2211}+\bar h_{3311}=0, \ \ \bar H_{,11}=0.
\end{equation*}
Hence, by the first equation of \eqref{3.1-16}, we have
\begin{equation}\label{3.1-23}
\bar \lambda_{1}\bar H=1.
\end{equation}
Combining \eqref{3.1-16}, \eqref{3.1-17} with \eqref{3.1-23}, we know
\begin{equation*}
\begin{aligned}
-2(\bar h^{2}_{222}+\bar h^{2}_{223})&=\bar \lambda_{1}\bar h_{1122}+\bar \lambda_{2}\bigg(\bar \lambda_{2}-\bar H\bar \lambda^{2}_{2}-\bar h_{1122}\bigg) \\
&=\bar \lambda_{2}\bigg(\bar \lambda_{2}-\bar H\bar \lambda^{2}_{2}\bigg)+(\bar \lambda_{1}-\bar \lambda_{2})\bar h_{1122} \\
&=\bar \lambda_{2}\bigg(\bar \lambda_{2}-\bar H\bar \lambda^{2}_{2}\bigg)+(\bar \lambda_{1}-\bar \lambda_{2}) \cdot \frac{2\bar\lambda_{2}}{\bar \lambda_{1}(\bar \lambda_{2}-\bar \lambda_{1})}(\bar h^{2}_{222}+\bar h^{2}_{223}) \\
&=\bar \lambda_{2}\bigg(\bar \lambda_{2}-\bar H\bar \lambda^{2}_{2}\bigg)-\frac{2\bar\lambda_{2}}{\bar \lambda_{1}}(\bar h^{2}_{222}+\bar h^{2}_{223}).
\end{aligned}
\end{equation*}
From \eqref{3.1-23}, we obtain
\begin{equation*}
\bar h^{2}_{222}+\bar h^{2}_{223}=-\frac{\bar \lambda^{2}_{2}}{2}.
\end{equation*}
It is a contradiction.

\vskip1mm
If  $\bar h^{2}_{221}+\bar h^{2}_{123} \neq 0$ and $ \bar h^{2}_{222}+\bar h^{2}_{223}=0$,
acccording to \eqref{2.1-14} and \eqref{2.1-15} in Lemma \ref{lemma 2.2}, we have
\begin{equation}\label{3.1-24}
\sum_{i,j,k}h_{ijk}^{2}=S(S-1), \ \
2\sum_{i,j,k,l}h_{ijl}h_{jkl}h_{ki}=f_{3}(S-1).
\end{equation}

\noindent Since $\bar h^{2}_{222}+\bar h^{2}_{223}=0$, it follows from \eqref{3.1-12} that
\begin{equation}\label{3.1-25}
\sum_{i,j,k}\bar h_{ijk}^{2}=6(\bar h^{2}_{221}+\bar h^{2}_{123}), \ \ 2\sum_{i,j,k,l}\bar h_{ijl}\bar h_{jkl}\bar h_{ki}
=4\bar H(\bar h^{2}_{221}+\bar h^{2}_{123}).
\end{equation}
Therefore, by \eqref{3.1-24} and \eqref{3.1-25}, we obtain
\begin{equation*}
6(\bar h^{2}_{221}+\bar h^{2}_{123})=S(S-1), \ \ 4\bar H(\bar h^{2}_{221}+\bar h^{2}_{123})=(S-1)f_{3},
\end{equation*}
that is,
\begin{equation*}
(2\bar H S-3f_{3})(S-1)=0.
\end{equation*}
Therefore, we conclude
\begin{equation*}
2\bar H S-3f_{3}=0, \ \ \bar H=\frac{3f_{3}}{2S},
\end{equation*}
because of $S>1$.

\vskip2mm
\noindent
If  $\bar \lambda_1$, $\bar \lambda_{2}$ and $\bar \lambda_3$ are distinct,
by using of \eqref{3.1-2}, \eqref{3.1-5} and \eqref{3.1-8}, we have that
\begin{equation}\label{3.1-26}
\bar h_{11k}=\bar h_{22k}=\bar h_{33k}=0, \ \ k=1, 2, 3,
\end{equation}

\noindent then
\begin{equation}\label{3.1-27}
\sum_{i,j,k}\bar h_{ijk}^{2}=6\bar h^{2}_{123}, \ \ 2\sum_{i,j,k,l}\bar h_{ijl}\bar h_{jkl}\bar h_{ki}
=4\bar H\bar h^{2}_{123}.
\end{equation}

\noindent It follows from the lemma \ref{lemma 2.1}
\begin{equation*}
6\bar h^{2}_{123}=S(S-1), \ \ 4\bar H\bar h^{2}_{123}=(S-1)f_{3},
\end{equation*}
namely,
\begin{equation*}
 (2\bar H S-3f_{3})(S-1)=0.
\end{equation*}
Hence, we get
\begin{equation*}
2\bar H S-3f_{3}=0, \ \ \bar H =\frac{3f_{3}}{2S}.
\end{equation*}
Thus, we complete  the  proof  of  the proposition 3.1.
\end{proof}

\noindent
By applying the generalized maximum principle for $\mathcal{L}$-operator to the function $-H$ and using the same proof as that of proving  the proposition \ref{theorem 3.1}, we can obtain the following
\begin{proposition}\label{theorem 3.2}
For a $3$-dimensional complete self-shrinker $X:M^{3}\rightarrow \mathbb{R}^{4}$ with nonzero constant squared norm $S$ of the second fundamental form. If $f_{3}$ is constant,
then we have that either $S=1$ or $\inf H=\frac{3f_{3}}{2S}$.
\end{proposition}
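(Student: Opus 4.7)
The plan is to apply Lemma \ref{lemma 2.3} to the auxiliary function $-H$. Since $S$ is a nonzero constant and $H^{2}\leq 3S$, the function $-H$ is bounded from above, and, as in the proof of Proposition \ref{theorem 3.1}, the constancy of $S$ together with the Gauss equations guarantees that the Ricci curvature of $X:M^{3}\to\mathbb{R}^{4}$ is bounded from below. Hence the generalized maximum principle yields a sequence $\{q_{t}\}\subset M^{3}$ such that
\begin{equation*}
\lim_{t\to\infty}H(q_{t})=\inf H,\quad \lim_{t\to\infty}|\nabla H|(q_{t})=0,\quad \limsup_{t\to\infty}\mathcal{L}(-H)(q_{t})\leq 0.
\end{equation*}
Passing to a subsequence, the bounded sequences $\{h_{ij}(q_{t})\}$, $\{h_{ijk}(q_{t})\}$, $\{h_{ijkl}(q_{t})\}$ converge to limits $\bar{h}_{ij}=\bar{\lambda}_{i}\delta_{ij}$, $\bar{h}_{ijk}$, $\bar{h}_{ijkl}$, while $H(q_{t})\to\bar{H}=\inf H$.

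The key observation is that the proof of Proposition \ref{theorem 3.1} never invokes the sign condition $\limsup\mathcal{L}H(p_{t})\leq 0$. All the relations \eqref{3.1-2}--\eqref{3.1-9} used in the case analysis are consequences of: the vanishing $|\nabla H|(p_{t})\to 0$, the formulas \eqref{2.1-11} coming from the self-shrinker equation, the Codazzi and Ricci identities \eqref{2.1-5}, \eqref{2.1-7}, and the derivative identities obtained by differentiating $S$ and $f_{3}$ (which are constant). Each of these persists under the replacement of $p_{t}$ by $q_{t}$, because none depends on whether we are at a maximizing or minimizing sequence for $H$. Consequently the entire chain of equations \eqref{3.1-2}--\eqref{3.1-9} holds verbatim at the limit along $\{q_{t}\}$, with $\bar H=\inf H$ in place of $\sup H$.

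I would then run the same four-way case analysis on the limiting eigenvalues $\bar{\lambda}_{1},\bar{\lambda}_{2},\bar{\lambda}_{3}$: all equal, exactly two equal with the common value zero or nonzero and the third zero or nonzero, or all distinct. In every configuration except those producing the identity $(2\bar H S-3f_{3})(S-1)=0$, the system forces $\bar{h}_{ijk}=0$ for all $i,j,k$ and hence $S=1$ via \eqref{2.1-14}, contradicting the assumption $S>1$. In the surviving configurations, combining \eqref{2.1-14} and \eqref{2.1-15} with the reductions $\sum_{i,j,k}\bar h_{ijk}^{2}=6(\bar h_{221}^{2}+\bar h_{123}^{2})$ and $2\sum_{i,j,k,l}\bar h_{ijl}\bar h_{jkl}\bar h_{ki}=4\bar H(\bar h_{221}^{2}+\bar h_{123}^{2})$ (or the analogous identity with $\bar h_{123}^{2}$ alone in the distinct-eigenvalue case) produces $(2\bar H S-3f_{3})(S-1)=0$, so that $\bar H=\tfrac{3f_{3}}{2S}$ since $S>1$.

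The only real point requiring care is verifying that the symmetry $H\mapsto -H$ does not disturb any step of the argument; this is immediate because the relations used are polynomial identities in $\bar\lambda_{i}$, $\bar h_{ijk}$ and $\bar h_{ijkl}$, and the bound $|H|\leq\sqrt{3S}$ makes $-H$ eligible for Lemma \ref{lemma 2.3}. No step relies on the sign of $\mathcal{L}H$ at the extremal sequence, so there is no obstacle beyond repeating the calculation of Proposition \ref{theorem 3.1} with $\bar H=\inf H$.
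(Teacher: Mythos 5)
Your proposal is correct and is exactly the paper's argument: the authors prove Proposition \ref{theorem 3.2} by applying Lemma \ref{lemma 2.3} to $-H$ (admissible since $H^{2}\leq 3S$) and repeating the proof of Proposition \ref{theorem 3.1} verbatim with $\bar H=\inf H$. Your key observation---that the case analysis uses only $|\nabla H|(p_{t})\to 0$ together with the identities obtained from the self-shrinker equation, the constancy of $S$ and $f_{3}$, and the Ricci identities, and never the sign condition $\limsup_{t\to\infty}\mathcal{L}H(p_{t})\leq 0$---is precisely what makes this transfer legitimate, and you have justified it more explicitly than the paper does.
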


\vskip3mm
\noindent
{\it Proof of Theorem \ref{theorem 1.1}}.
If $S=0$, we know that $X: M^{3}\to \mathbb{R}^{4}$ is the  plane $\mathbb{R}^{3}$. If $S\neq0$,
from the proposition \ref{theorem 3.1} and the proposition \ref{theorem 3.2}, we have that either $S=1$ or $\sup H=\inf H=\frac{3f_{3}}{2S}$.
It follows from \eqref{2.1-14} that the mean curvature $H$ and the principal curvatures are constant.
Then,  $X: M^{3}\to \mathbb{R}^{4}$
is an isoparametric hypersurface.
$X: M^{3}\to \mathbb{R}^{4}$
is either $\mathbb{S}^{1}(1)\times \mathbb{R}^{2}$, or $\mathbb{S}^{2}(\sqrt{2})\times \mathbb{R}^{1}$ or $\mathbb{S}^{3}(\sqrt{3})$.

\begin{flushright}
$\square$
\end{flushright}

\vskip2mm
\noindent
Acknowledgements. We would like to express our gratitude  to  referees for valuable comments and suggestions.

\vskip2mm


\begin{thebibliography}{99}
\bibitem{AL}
U. Abresch and J. Langer, The normalized curve shortening flow and
homothetic  solutions, J. Differential Geom., {\bf 23}(1986), 175-196.


\bibitem{ALW}
B. Andrews, H. Li and Y. Wei, $\mathcal F$-stability for self-shrinking solutions  to mean curvature flow, Asian J. Math., {\bf 18} (2014), 757-778.



\bibitem{AS}
C. Arezzo, J. Sun, Self-shrinkers for the mean curvature flow in arbitrary codimension, Math. Z., {\bf 274} (2013), no. 3-4, 993-1027.

\bibitem{CL}
H.-D. Cao and H. Li,  A gap theorem for self-shrinkers of the mean
curvature flow in arbitrary codimension, Calc. Var. Partial Differential Equations, {\bf 46} (2013), 879-889.

\bibitem{CHW}
Q. -M. Cheng, H. Hori and G. Wei, Complete Lagrangian self-shrinkers in $\mathbb R^{4}$,  Math. Z.  {\bf 301}(2022), 3417-3468.

\bibitem{CLW}
Q. -M. Cheng, Z. Li, and G. Wei, Complete self-shrinkers with constant norm of the second fundamental form, Math. Z., {\bf 300} (2022), no. 1, 995-1018.

\bibitem{CO}
Q. -M. Cheng and S. Ogata, $2$-dimensional complete  self-shrinkers in $\mathbb R^{3}$,
  Math. Z., {\bf 284}(2016), 537-542.


\bibitem{CP}
Q. -M. Cheng and Y. Peng,  Complete  self-shrinkers of the mean curvature flow,
  Calc. Var. Partial Differential Equations, {\bf 52} (2015),no. 3-4, 497-506.


\bibitem{CW}
Q. -M. Cheng and G. Wei,  A gap theorem for self-shrinkers,
Trans. Amer. Math. Soc., {\bf 367} (2015), 4895-4915.

\bibitem{cw1}
Q. -M. Cheng and G. Wei,  Complete self-shrinkers of mean curvature flow,
Proceedings of the International Consortium of Chinese Mathematicians
2018, pp.179-196,  Int. Press, Boston, MA, 2020.

\bibitem{cw2}
Q. -M. Cheng and G. Wei,  Complete $\lambda$-surfaces in $\mathbb{R}^3$,  Cal Var.  Partial Differential Equations,
(2021), 60:46: 1-19.



\bibitem{CWY}
Q. -M. Cheng, G. Wei and  W. Yano,  The second gap on  complete self-shrinkers,
 Proc. Amer. Math. Soc. {\bf 151}(2023),  339-348.


\bibitem{CZ}
X. Cheng and D. Zhou,  Volume estimate about shrinkers, Proc. Amer. Math. Soc., {\bf 141} (2013), 687-696.

\bibitem{CM}
T. H. Colding and W. P.  Minicozzi II,  Generic mean curvature flow I;  Generic singularities,
Ann. of Math.,  {\bf 175} (2012), 755-833.


\bibitem{DX1}
Q. Ding and Y. L. Xin, Volume growth, eigenvalue and compactness for self-shrinkers,  Asian J. Math., {\bf 17} (2013), 443-456.

\bibitem{DX2}
Q. Ding and Y. L. Xin, The rigidity theorems of self shrinkers, Trans. Amer. Math. Soc., {\bf 366} (2014), 5067-5085.

\bibitem{H}
H. Halldorsson, Self-similar solutions to the curve shortening flow, Trans. Amer. Math. Soc., {\bf 364} (2012), 5285-5309.

\bibitem{H2} G. Huisken,
Asymptotic behavior for singularities of the mean curvature flow, J. Differential
Geom., {\bf 31} (1990), 285-299.

\bibitem{H3}
G. Huisken, Local and global behaviour of hypersurfaces moving by mean curvature,  Differential
geometry: partial differential equations on manifolds (Los Angeles, CA, 1990), Proc. Sympos. Pure
Math., {\bf 54}, Part 1, Amer. Math. Soc., Providence, RI, (1993), 175-191.


\bibitem{LS}
Nam Q. Le and N. Sesum, Blow-up rate of the mean curvature during the mean curvature
flow and a gap theorem for self-shrinkers, Comm. Anal. Geom., {\bf 19} (2011), no.4, 1-27.

\bibitem{lw}
Z. Li and G. Wei. Complete 3-dimensional $\lambda$-translators in the Minkowski space $\mathbb{R}^4_1$, J.  Math. Soc. Japan, {\bf 75}(2023), no.1, 119-150.

\bibitem{LXX}
L. Lei, H. W. Xu and Z. Y. Xu, A new pinching theorem for complete self-shrinkers and its
generalization, Sci. China Math., {\bf 63} (2020), no. 6, 1139-1152.



\bibitem{LW1}
H. Li and Y.  Wei, Lower volume growth estimates for self-shrinkers of mean curvature flow, Proc. Amer. Math. Soc.,  {\bf 142} (2014), 3237-3248.


\bibitem{LW2}
H. Li and Y. Wei, Classification and rigidity of self-shrinkers in the mean curvature flow, J. Math. Soc. Japan, {\bf 66} (2014), 709-734.



\bibitem{W}
L. Wang, A Benstein type theorem for self-similar shrinkers, Geom. Dedicata, {\bf 15} (2011),
297-303.

\end{thebibliography}
\end{document}